\documentclass[12pt]{amsart}
\usepackage{latexsym,fancyhdr,amssymb,color,amsmath,amsthm,graphicx,listings,comment}
\usepackage[section]{placeins}
\newtheorem{lemma}{Lemma} 

\let\paragraph\subsection

\title{Remarks about the Moebius-Kantor graph}
\author{Oliver Knill}
\date{May 28, 2026}
\address{Department of Mathematics \\ Harvard University \\ Cambridge, MA, 02138 }
\subjclass{}

\keywords{Moebius-Kantor, Topological Graph theory, Natural groups}

\begin{document}
\maketitle

\begin{abstract}
The Moebius-Kantor graph MK=G(8,3) is a Cayley
graph of three non-Abelian groups, the Pauli group P(1), the semi-dihedral group SD(16), as well as the 
dihedral group D(16) of order 16. In topological graph theory,
it illustrates the Heawood number 7 of the torus and 
leads to the Tucker group Aut(MK), the unique group of genus 2. We compute the Lefschetz numbers to 
illustrate the Brouwer-Lefschetz fixed point theorem.
MK is also the dual of the 2-skeleton complex of the 3-sphere G. 
The graph represents one of  flat Clifford tori of a 
Hopf fibration in the 3-sphere G=K(2,2,2,2) reflecting that Coxeter \cite{Coxeter1950} 
saw that MK is a subgraph of the 
tesseract $G^*$. The Graph MK also has the Hopf-Rynov property: any two vertices in MK
in distance smaller than its diameter are connected by a unique shortest path if we 
take the geodesic distance given in \cite{geodesics1}, closed geodesics in $MK$ have 
length $8$. 
It carries a metric $d$ so that $(MK,d)$ has only one algebraic group structure
$(P(1),*)$ preserving the metric.  It makes the Pauli group natural 
\cite{GraphsGroupsGeometry}, similarly as the Moebius ladder $M(16)$ makes the 
dihedral group $D(16)$ natural, forcing the algebraic structure from the metric structure.
\end{abstract}

\section{Introduction}

\paragraph{}
The {\bf Moebius-Kantor graph} MK is important in {\bf topological graph theory} \cite{TuckerGross}.
It is a graph regular toroidal graph that is the Cayley graph of three different groups
P(1), D(16) and SD(16) of order 16. \footnote{There are 14 groups of order 16, five of which are Abelian}.
MK is the dual graph of a triangulation of the 2-torus. 
Similarly as the tesseract (8-cell) can be seen as a 3-sphere (16 cell) as the polytope dual 
of the cross polytop $K(2,2,2,2)$ (complete bipartite graph), the graph $MK$ can be seen as a torus that can be
embedded in the 8-cell as a discrete Clifford torus. It is naturally flat with respect to a
Higushi curvature $K(v) = 1/d_1 + 1/d_2 + 1/d_3-1/2$ (see \cite{FormCurvatures} in arbitrary 
dimensions), if the three polygons intersecting in $v$ have degrees $d_j$. 
The faces for MK all have degree $d_j=6$. The graph Gauss-Bonnet curvature $K=1-d(x)/2$ 
is constant $-1/2$ adding up to the Euler characteristic $|V|-|E|$. 


\begin{figure}[!htpb]
\scalebox{0.3}{\includegraphics{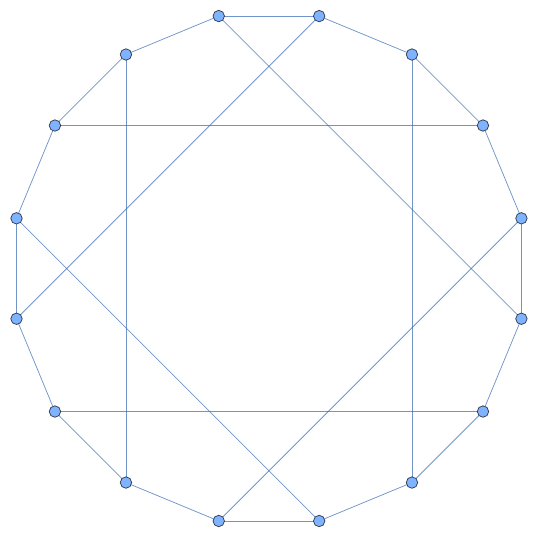}}
\scalebox{0.3}{\includegraphics{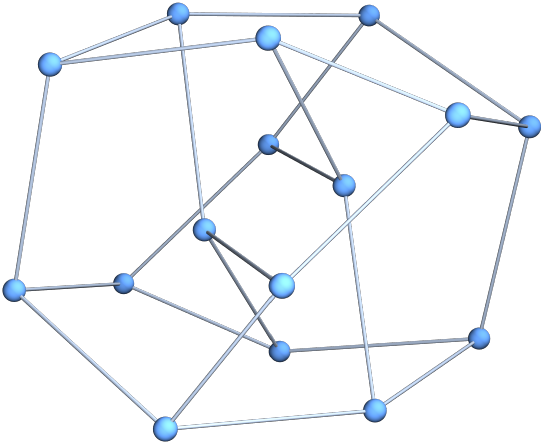}}
\scalebox{0.3}{\includegraphics{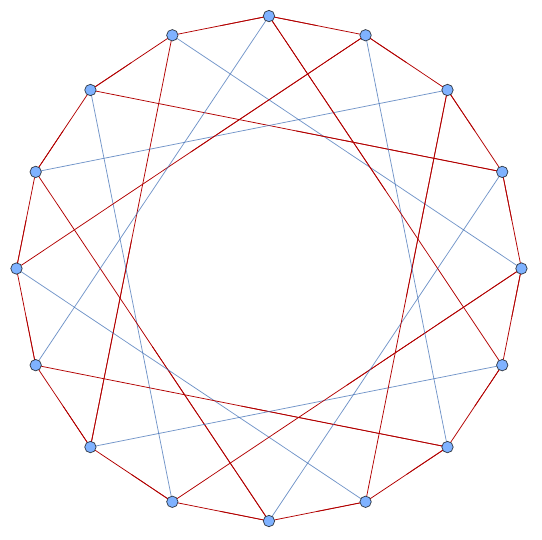}}
\scalebox{0.3}{\includegraphics{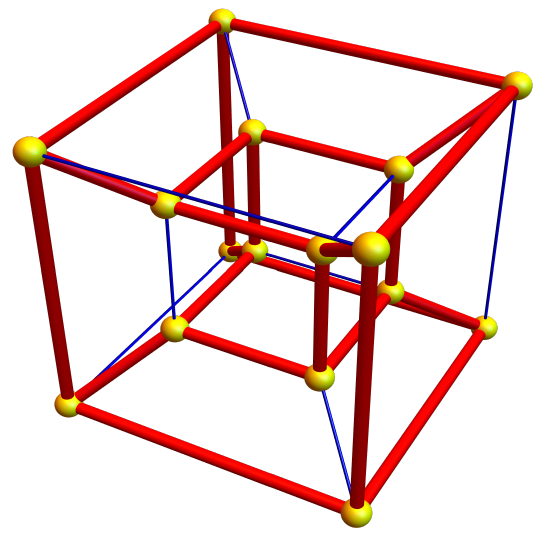}}
\label{Moebius Kantor} 
\caption{
The Moebius-Kantor graph MK is first shown with its vertices on a circle in $\mathbb{R}^2$,
then drawn in three dimensions. It is a subgraph of the circulant graph $C_{16}(1,5)$
with connection set $\pm 1,\pm 5$.
Coxeter noticed that it can be seen subgraph of the tesseract \cite{Coxeter1950}.
}
\end{figure}

\paragraph{}
H.S.M. Coxeter \cite{Coxeter1950} depicted MK as a subgraph of the {\bf tesseract} $G^*$. 
MK is obtained by deleting $8$ non-intersecting edges in $G^*$ as in Figure 1. 
The tesseract is the dual $G^*$ of the {\bf 16-cell} = {\bf cross-polytope} $G=K_{2,2,2,2}$,
the complete multi-partite graph.
The hypercube $G^*$ can be seen as the cube in 
$4$-dimensional Euclidean space $\mathbb{R}^4$ or as the Cartesian product of $K_2$.
An unfolded net of the tesseract is featured in Dal\'i's surrealist oil 
picture "Crucifixion (Corpus Hypercubus)" from 1954. \footnote{The painting is in the
Metropolitan Museum of Art in New York.}

\paragraph{}
The hypercube $G^*$ is remarkable as it has a Hamiltonian decomposition
\cite{BermondFavaronMaheo}: 
it can be decomposed as a union of two disjoint
Hamiltonian cycles, simple closed paths that cover all vertices.
Finding Hamiltonian cycles has been a {\bf mathematical game} 
{\bf Icosian game}, invented by William Rowan Hamilton on the dodecahedron. Any 
graph with this complementary Hamiltonian decomposition property must be 4-regular
and so is Eulerian by the {\bf Euler-Hierholzer theorem}. 
In such a case the two Hamiltonian cycles on the hypercube can be 
``added" (in the sense of the fundamental group) to get an {\bf Eulerian
cycle} in $G^*$, a closed path visiting each edge exactly once.
The subgraph $MK$ is still Hamiltonian but no longer Eulerian, as it is cubic. 

\begin{figure}[!htpb]
\scalebox{0.5}{\includegraphics{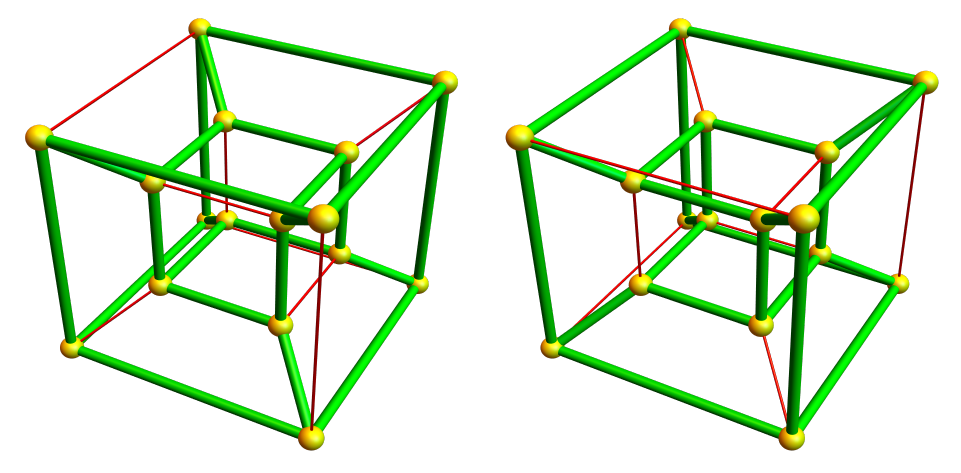}}
\scalebox{0.4}{\includegraphics{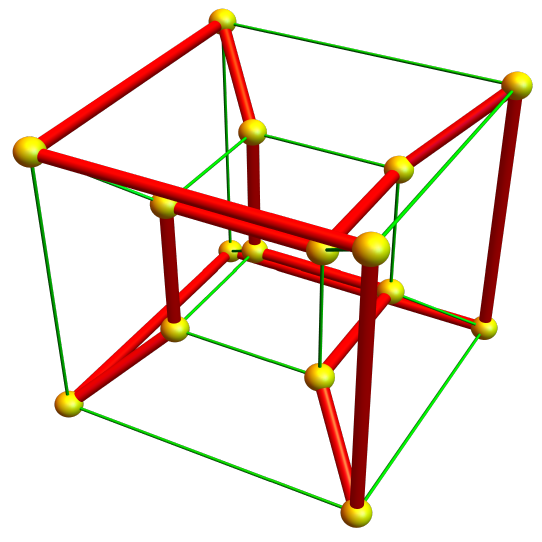}}
\caption{
Displaying 2 isometric rotated tori in the tesseract can be seen as a discrete analog
of what the Hopf fibration displays. The union of the complements
of the MK sub-tori in the tesseract $G^*$ give a Hamiltonian path in $G^*$.
Also the edge complement of this path is a Hamiltonian path. The tesseract
has a Hamiltonian decomposition of its edges into two Hamiltonian cycles.
In such a case the addition produces an {\bf Eulerian cycle}, a path covering
every edge exactly once.
}
\end{figure}

\paragraph{}
The polytopes $G,G^*$ are two of the six regular {\bf Platonic polytopes}
meaning regular convex $4$-polytopes. Besides the 16-cell $G$
and the 8-cell $G^*$, there is the 4-cell (the 3-skeleton of the hyper-tetrahedron $K_5$),
the self-dual $24$ cell $U$, (the group of units in the Hurwitz 
quaternions \cite{Hurwitz1919}), and then there is the 120-cell 
(the hyper-dodecahedron) and the 600-cell (the hyper-icosahedron). 
Ludwig Schlaefli first classified all Platonic solids in four dimensions in the 1850s.
Among these six polytopes 
(the 4,8,16,24,120,600 cells), only the 8 cell and 600 cell are discrete 
3-manifolds in the sense that they are generated by their 1-skeletons. 
\footnote{Or better, in the sense that all unit spheres are $2$-spheres.}

\begin{figure}[!htpb]
\scalebox{0.06}{\includegraphics{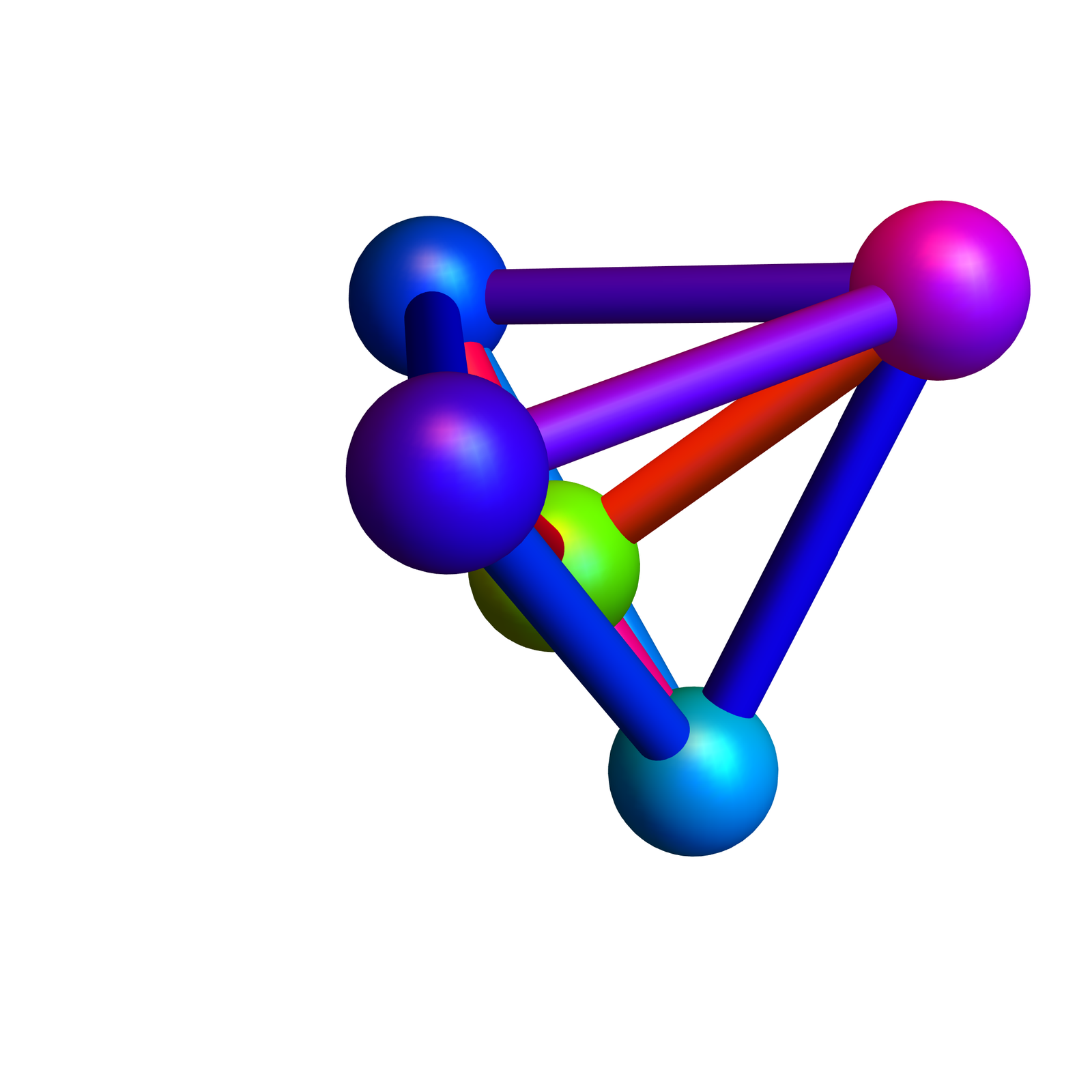}}
\scalebox{0.06}{\includegraphics{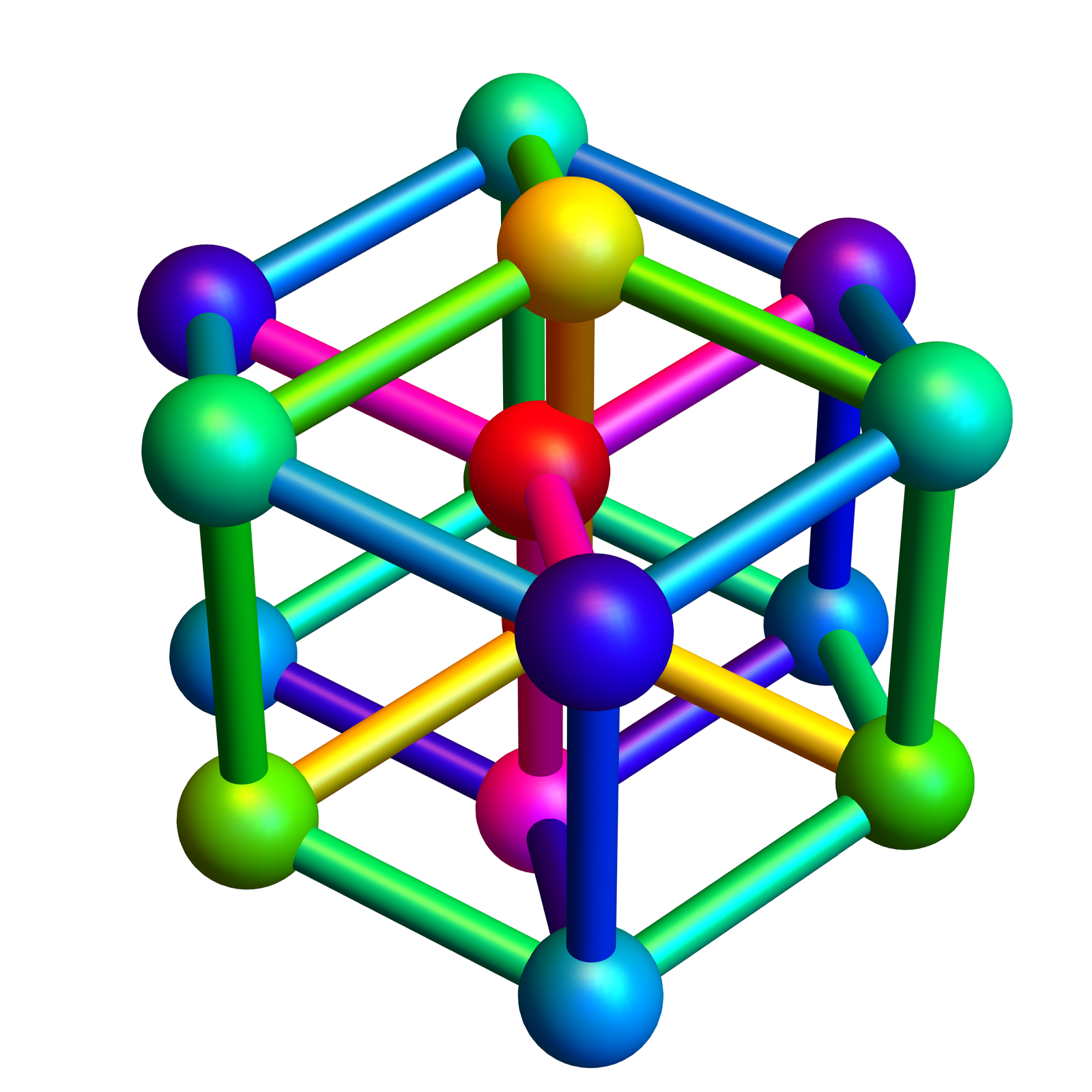}}
\scalebox{0.06}{\includegraphics{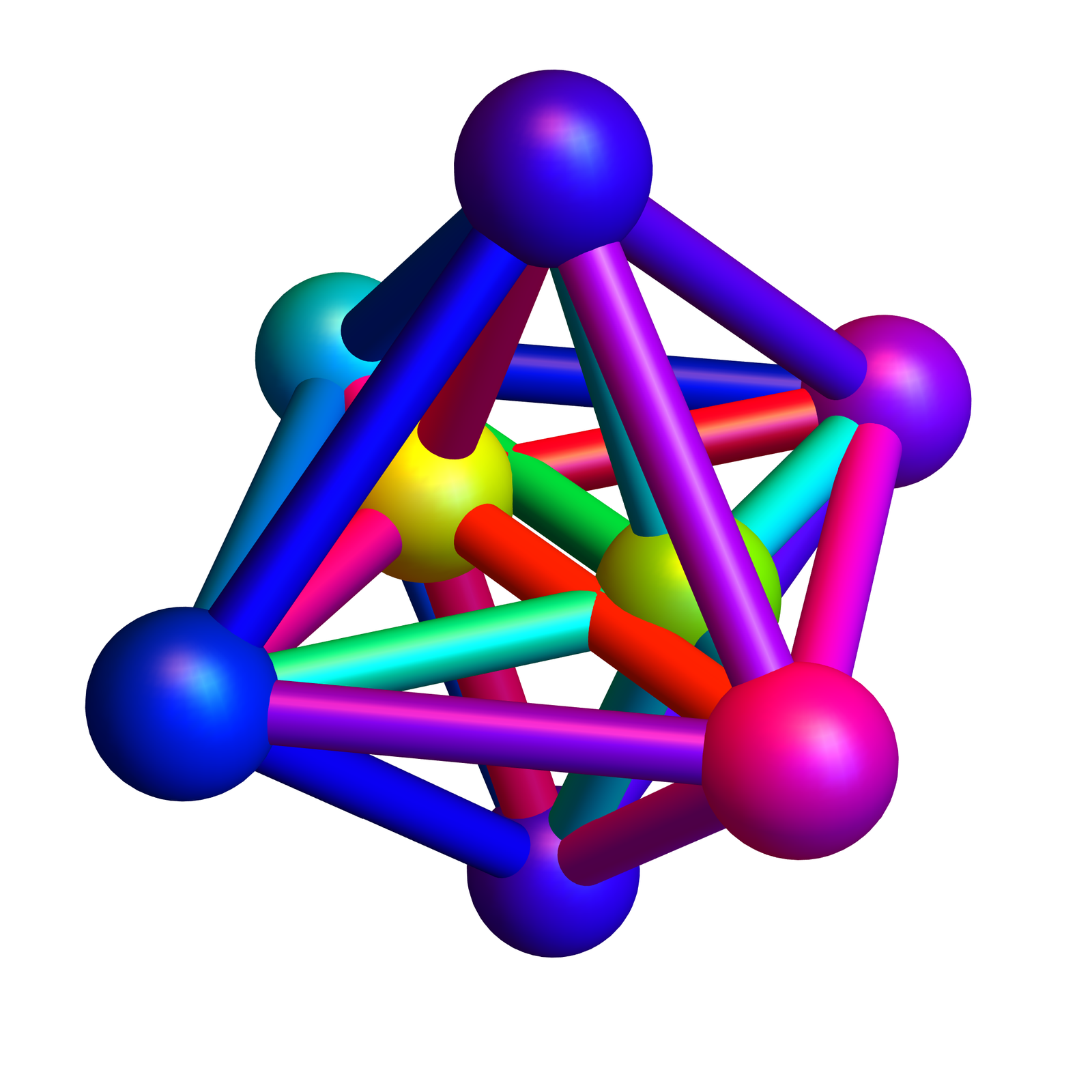}}
\scalebox{0.06}{\includegraphics{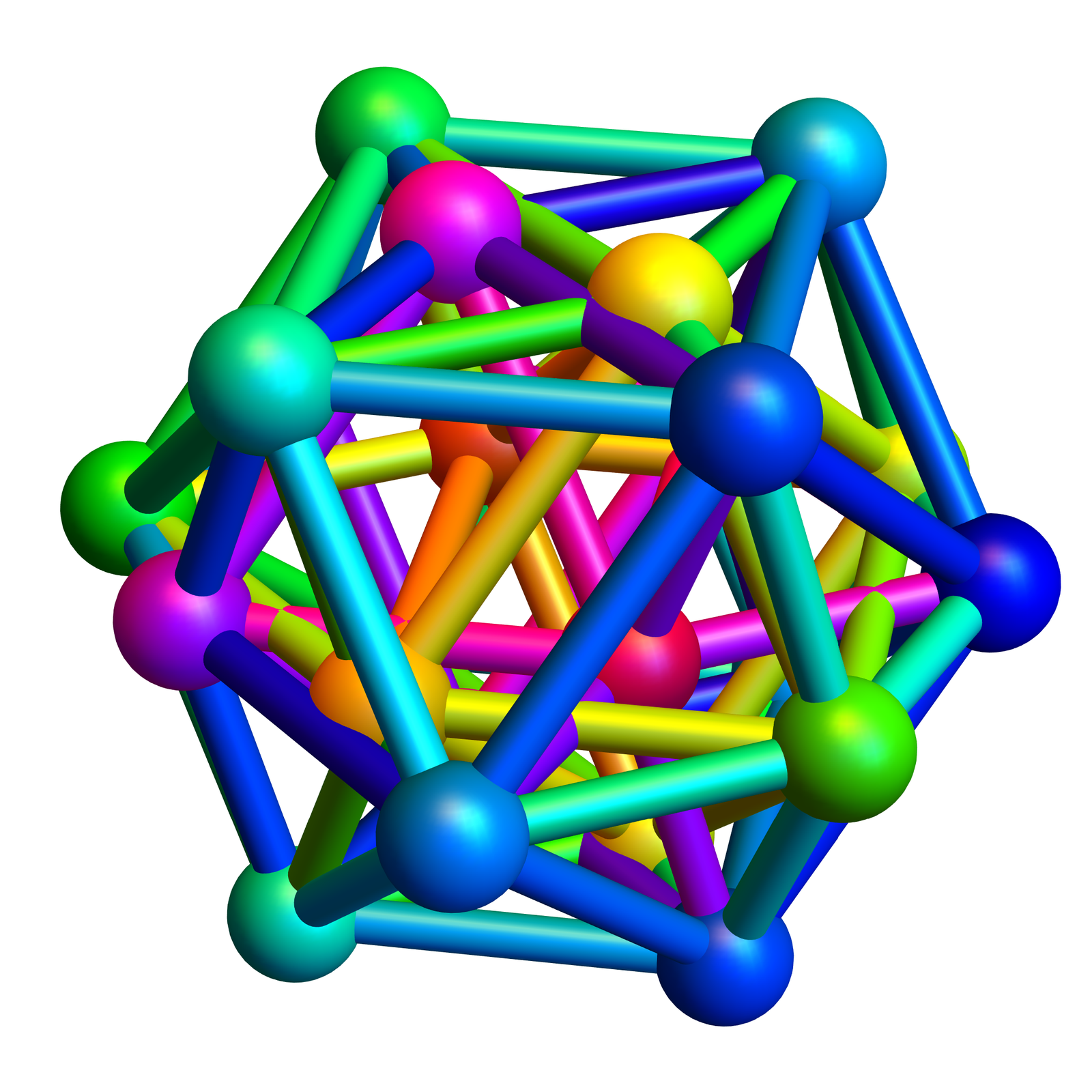}}
\scalebox{0.06}{\includegraphics{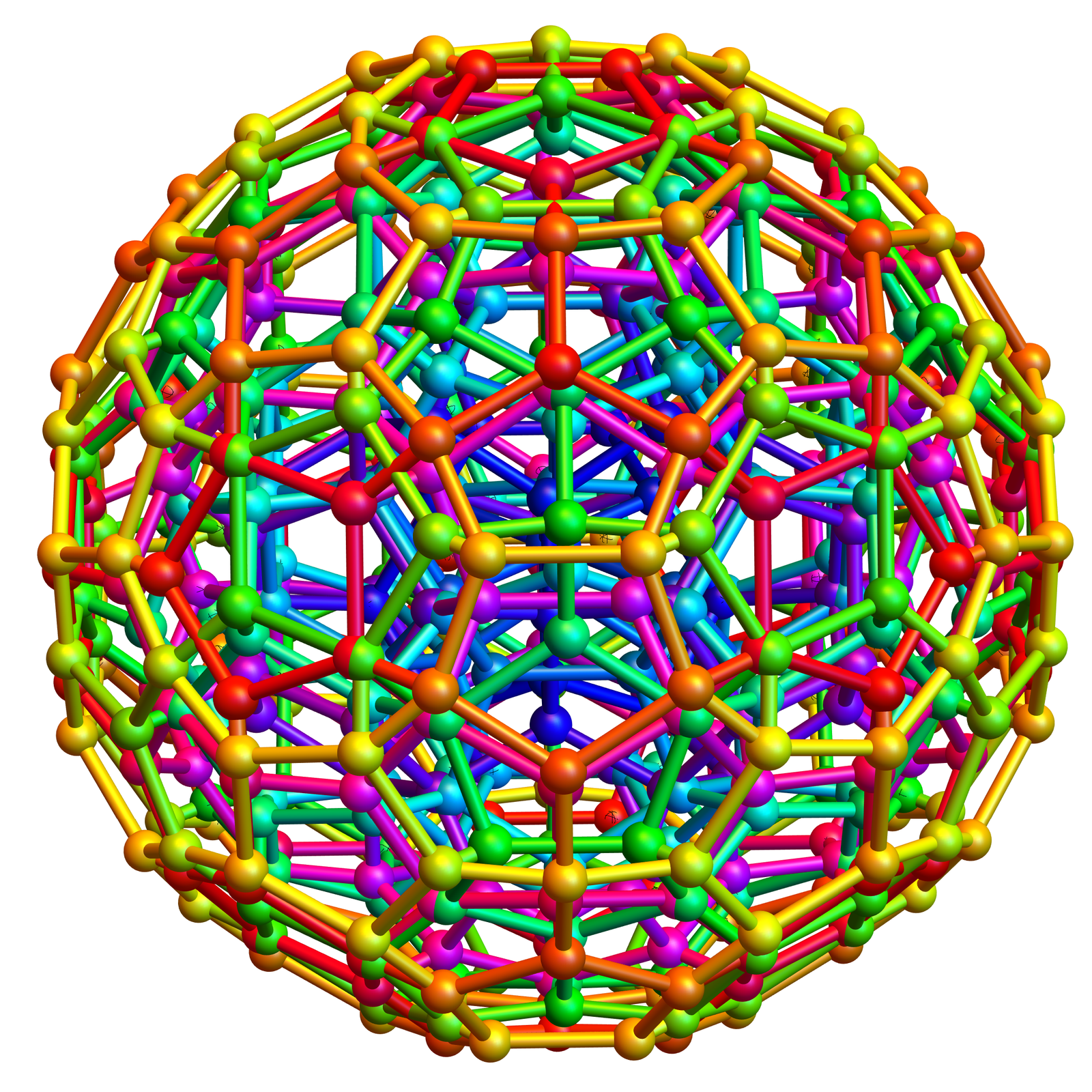}} 
\scalebox{0.06}{\includegraphics{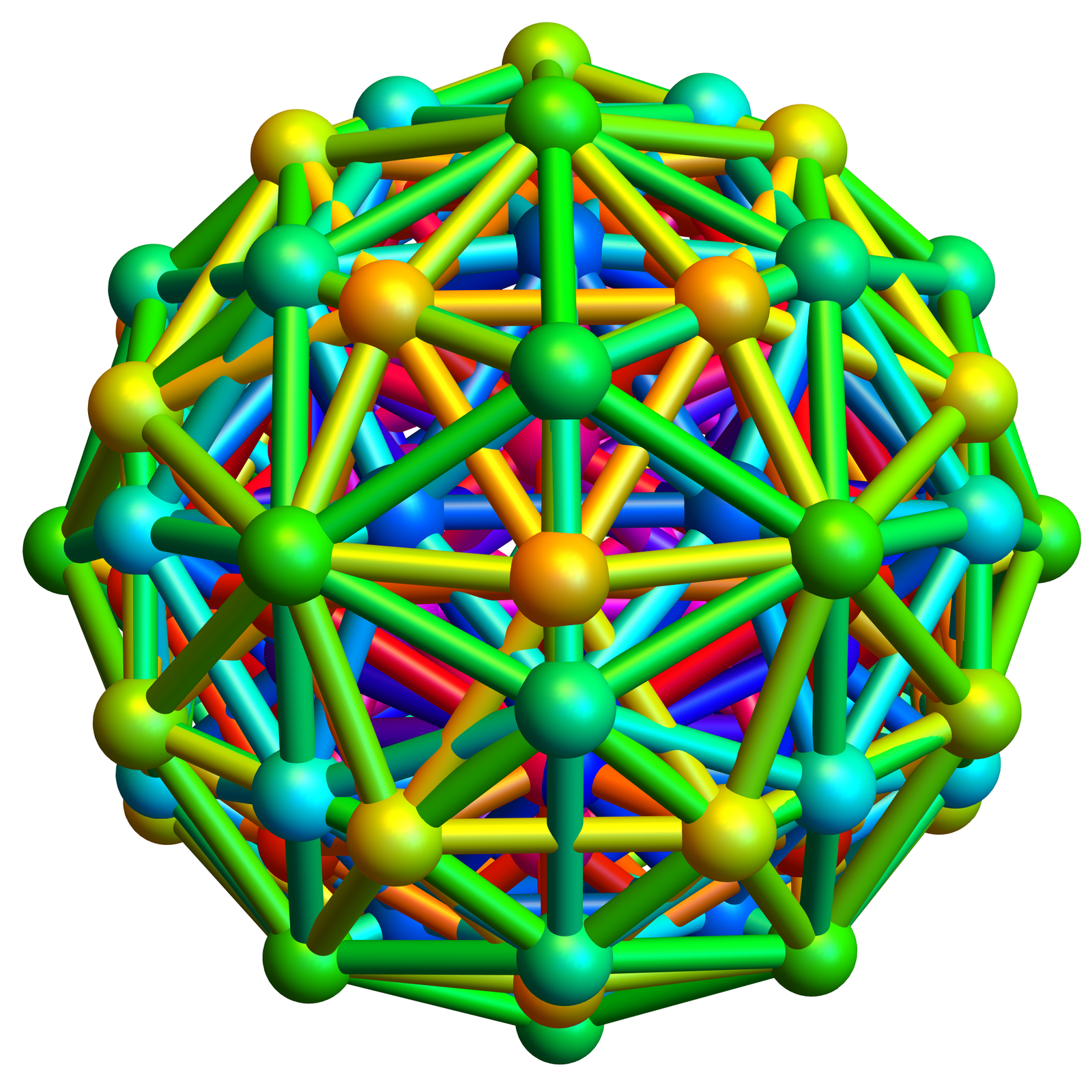}}
\label{popculture}
\caption{
The six Platonic solids in 4D. The second is the tesseract (the 8 cell), 
the third is the cross polytop (the 16 cell). 
}
\end{figure}

\paragraph{}
The graph MK is remarkable because it is the Cayley graph 
of three different non-Abelian finite groups, 
the group $SD(16) = C(8) \rtimes C(2)$, the {\bf dihedral
group} $D(16)$ and the Pauli group $P(1) = (C(4) \times C(2))  \rtimes C(2)$, 
where $\rtimes$ is a semi-direct product. 
The two groups are both non-trivial fiber bundles 
\footnote{The fiber bundle terminology is usually not used but it is intuitive. It means
semidirect product with Abelian normal subgroup.}
over $C(2)$ but the 
Abelian fibers are different groups of order $8$: every element in $C(8)$ is cyclic with
an element of order $8$, while every element in $C(4) \times C(2)$ is cyclic of order $4$. 
\footnote{This is not unusual: the smallest case is the bipartite $K(4,4)$ which is the 
  Cayley graph of both the dihedral $D(4)$ 
  and the quaternion group $Q(8)$. And $K(8,8)$ even has $D(16),SD(16),Q(16),D(8) \times C(2)$
  as Cayley graphs. And the 3-sphere $G=K(2,2,2,2)$ is the Cayley graph of both $Q(8)$ and $D(8)$.
  And $K(8)$ with two disjoint 4 cycles removed has $D(8)$ and $Q(8)$ as Cayley graphs. 
  $D(16)$ and $D(8) \times D(2)$ both can have the same Cayley graph. } 

\paragraph{}
The extension structure of the three groups is similar to the one of the {\bf Rubik's Cube}
which is known to have the structure $(C_2^{11} \times C_3^7) \rtimes S_8$ and so is also
a non-trivial Abelian fiber bundle \cite{Singmaster1981}.
The {\bf Pauli group} $P(1)$ is generated by the {\bf Pauli matrices}
$$ X=\left[\begin{array}{cc} 0 & 1 \\ 1 & 0 \end{array} \right],
   Y=\left[\begin{array}{cc} 0 & -i \\ i & 0 \end{array} \right], 
   Z=\left[\begin{array}{cc} 1 & 0 \\ 0 & -1 \end{array} \right] \; , $$
for which $iX,iY,iZ$ is a basis of $su(2)$, the Lie algebra of $SU(2)$. The latter Lie group
is topologically the 3-sphere, the only Euclidean sphere admitting a non-Abelian
group structure. 

\paragraph{}
The Pauli group $P(1)$ contains the {\bf quaternion group} $Q(8)$, the group generated by the 
unit quaternions $i,j,k$ as a subgroup. An interesting question we pursued was to 
see whether the restricted floppy group (the $3 \times 3 \times 1$ Rubik) with 192 
elements (in a reduced form) can have the Tucker
group as a subgroup. But floppy does not have any element of order $8$ so that this is not
possible. We could not yet answer the question whether the Tucker group could be realized
as a subgroup of the Rubik's Cube. 
\footnote{For the pocket Rubik's Cube, GAP can determine the structure of all subgroups
of order 96. It does not seem to contain the Tucker group. It would be interesting to see
whether in one of the known Rubik-type puzzles, the Tucker group could be realized.}

\begin{figure}[!htpb]
\scalebox{0.3}{\includegraphics{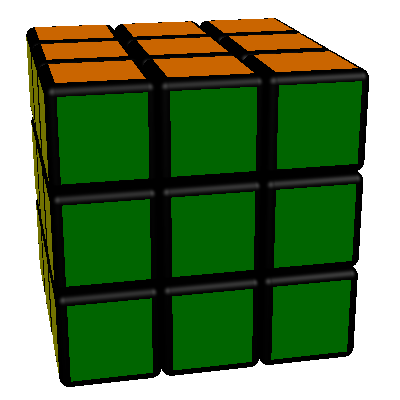}}
\scalebox{0.3}{\includegraphics{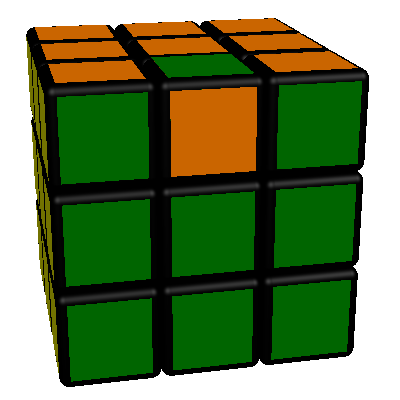}}
\label{Rubik}
\caption{
The Rubik's Cube and an impossible configuration in the form 
of a Meson (a quark-anti-quark state). 
}
\end{figure}

\paragraph{}
The graph $MK$ is {\bf toroidal}. It can be embedded in the torus 
$\mathbb{T}^2$. This embedding subdivides the torus into eight hexagonal
regions. The dual is a $6$-regular triangulation of the torus; it is
however very small because the dual graph has diameter $2$. If we identify two
of the hexagonal regions (like $5$ and $7$ in the figure), the dual is $K_7$ and 
illustrates the Heawood sharp 7-coloring bound of the torus. The graph coloring problem 
for discrete 2-manifolds \footnote{Meaning that every unit sphere is a circular graph}
is a different problem. The embedding of $MK$ in the torus is not locally 
planar in the sense of \cite{AlbertsonStromquist}. Albertson and 
Stromquist conjectured that 5 colors suffice for such 2-manifolds. 

\begin{figure}[!htpb]
\scalebox{0.5}{\includegraphics{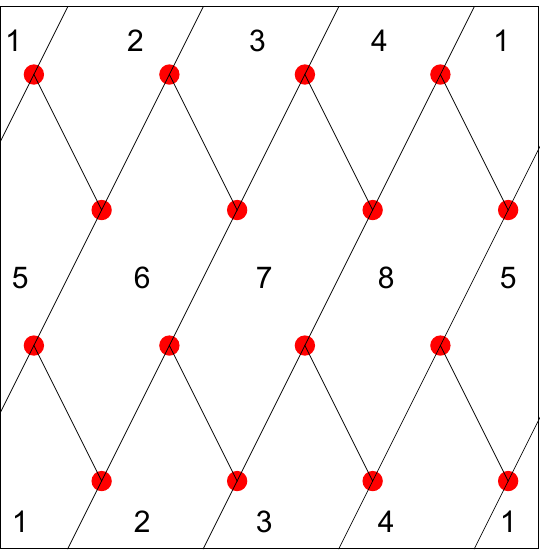}}
\scalebox{0.5}{\includegraphics{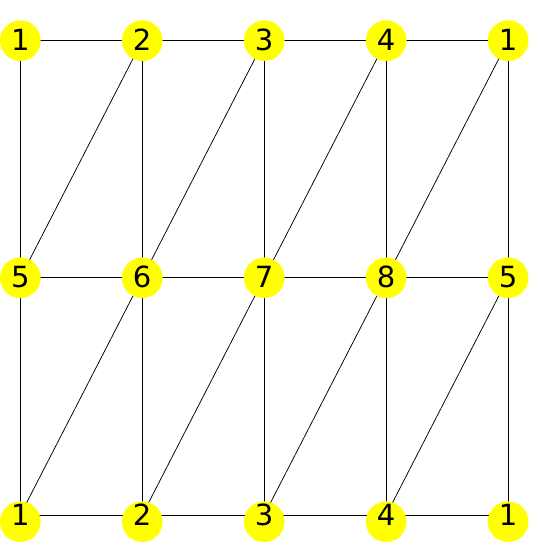}}
\caption{
The graph $MK$ on the torus. One can see the hexagonal faces= "countries". We can 
color country 5 and 7 with the same color and have a coloring of the 2-torus. 
The dual of MK is a triangulation of the torus. (But is not a 2-manifold as there is
no embedded wheel graph, wheel graphs have boundary points identified).
}
\label{heawood}
\end{figure}

\paragraph{}
A major reasons why the graph $MK$ has is important because its automorphism group 
$Aut(MK)$ is the unique group of genus $2$. 
This group has order 96 and called the {\bf Tucker group} \cite{Tucker1984}.
It can be seen as $GL(2,3) \rtimes C_2 = (Q_8 \rtimes S_3)  \rtimes C_2$.
Its {\bf White genus} $\gamma(G) = {\rm min}_S {\bf Genus}(Cayley(G,S))$ \cite{White1972}
is 2. Tucker uses the {\bf Euler formula} $V-E+F=-2$ and the regularity of 
Cayley graphs to force algebraic presentations.
\footnote{Key is the Proulx's classification from 1978 of toroidal groups 
   to discard Cayley graph that are embeddable in the torus. }
The fact that a single topological feature characterizes a finite group is remarkable. 
The Cayley graph on the genus 2 surface of this group has been built by artists. It is a 
graph that can not be embedded on the torus but can be embedded on the genus 2 torus. 

\begin{figure}[!htpb]
\scalebox{0.9}{\includegraphics{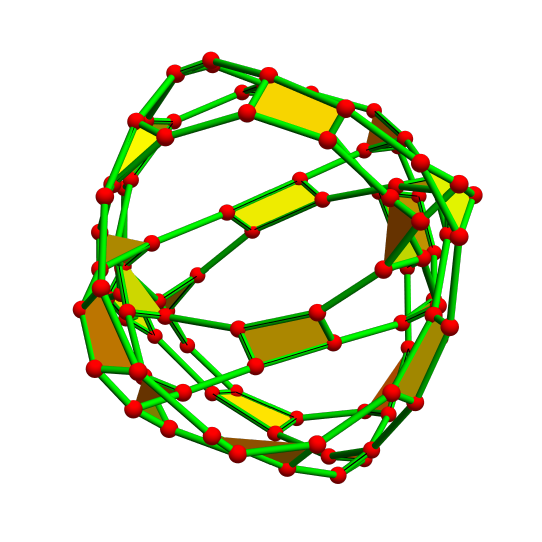}}
\label{Tucker graph}
\caption{
The Cayley graph of the Tucker group that has genus 2. 
}
\end{figure}

\section{Geodesics}

\paragraph{}
If $v,w$ are two different points on a graph, a path of minimal length that 
connects $v$ with $w$ is a {\bf graph geodesic}. There are many geodesics in general 
already for distance $2$. If $G$ is a $q$-manifold, 
and $G^*$ is the dual graph, a triangle free $(q+1)$-regular graph, we have
defined a geodesic flow on the {\bf frame bundle} $F$ of $G^*$, the set of
oriented maximal simplices. It is a permutation on $F$. It is given by 
$(x_0,x_1,x_2,\dots, x_q) \to (x_1,x_2,x_3,\dots x_0')$, where $\{x_0,x_0'\}$ is the $0$-sphere
obtained by intersecting the unit spheres $S(x_1) \cap  \cdots \cap S(x_{q})$.

\paragraph{}
As there are exactly $(q+1)!$ geodesics emanating every maximal simplex in $G$,
not every two pairs can be connected with a geodesic. When is there is a unique
geodesic between two points in $G^*$?

\paragraph{}
There are different ways to define a 2 dimensional geodesic sheet in a q-manifold $G$
in order to define sectional curvature. Sectional curvature determines the Riemann 
tensor in the continuum so that there is motivation to have a good notion of sectional
curvature also in finite manifold structures. Here are three approaches we have followed
so far. The last one involves geodesics explicitly.

\paragraph{}
{\bf A) Mickey-Mouse sectional curvature}.
The simplest is to take a wheel graph in $G$ and define sectional the curvature as 
$1-1/{\rm deg}(x)$. This extremely naive notion are many draw-backs. One reason is
that there many ways to draw a wheel graph. There are therefore
many ways to construct ``geodesic manifolds" by patching together such wheel graphs.
In any case, the assumption to have all wheel graphs to have positive curvature is very 
narrow.  Positive curvature q-manifolds all necessarily are $q$-spheres. Instead of the degree
of a wheel graph, one can also count the set of all maximal simplices hinging on a
$q-2$ simplex and so get a notion of curvature for each edge.

\paragraph{}
{\bf B) Regge construction using bones}. This is motivated by \cite{Regge,Misner}: 
A $2$ simplex $t$ in a q-manifold has 3 edges. Each $e$  
in them defines a $(q-2)$ simplex $x \setminus e$, whose dual is a 1-sphere. 
\footnote{The dual of a simplex $x$ is the intersection of all unit spheres of vertices in $x$.
If the dimension of $x$ is $k$, then the dimension of the dual is $q-k-1$.}
These three 1-spheres produce three petals of length $d_i$ and so a curvature 
$\sum_{i=1}^3 1/d_i -1/2$. When continuing the construction, one gets geodesic sheets. 
The main draw back is that these sheets are not invariant under the geodesic flow.
In the case of the graph $G=K(2,2,2,2)$, the geodesic sheets are now all octahedra. 

\begin{figure}[!htpb]
\scalebox{0.5}{\includegraphics{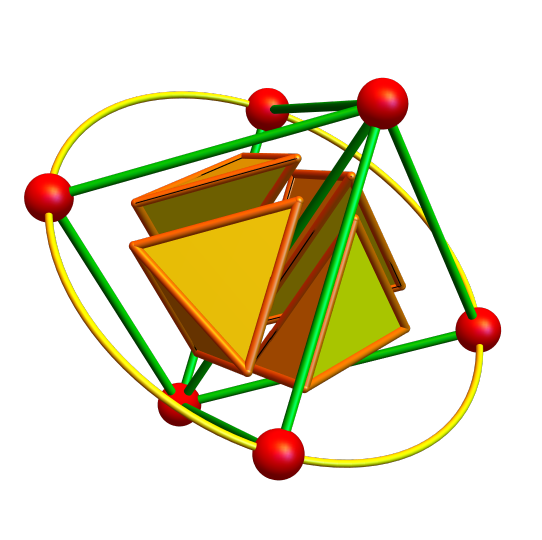}}
\scalebox{0.5}{\includegraphics{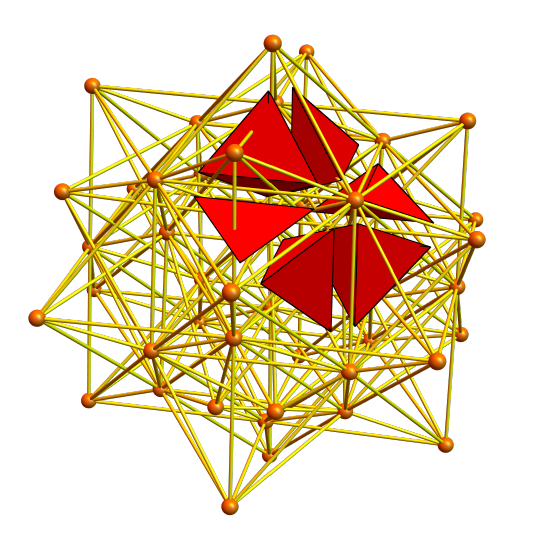}}
\label{Regge} 
\caption{
To the left we see the Regge construction in the 3-manifold $K(2,2,2,2)$. The edge degree is $4$. 
The 3-manifold seen to the right is a refinement of $K(2,2,2,2)$. Every edge has a circular
arrangement of maximal simplices hinging on it. 
}
\end{figure}

\paragraph{}
{\bf C) Geodesic sheets}. The third possibility is to use a triangle to define
$6$ geodesics and continue building a web of geodesics on those paths.
This produces a 2-manifold that is invariant under the geodesic flow. 
This is how we got to $MK$, as $MK$ appeared as such a sheet. 
And it produced a difficulty: even in highly "positive curvature manifolds" 
like the 3-sphere $K(2,2,2,2)$, the geodesic
sheets constructed as such can be flat. 

\begin{figure}[!htpb]
\scalebox{0.5}{\includegraphics{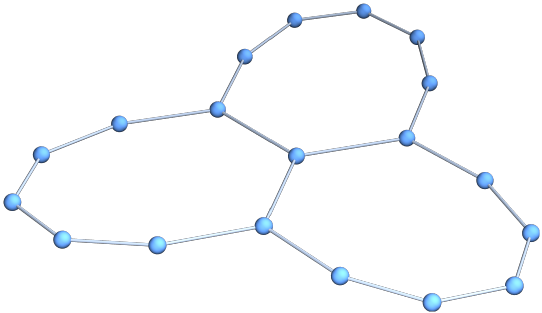}}
\label{Clover}
\caption{
This graph has been constructed by using geodesic sheets.
We see part of the $KM$ graph. 
}
\end{figure}

\paragraph{}
Let us call a sub graph $\Gamma$ in a dual $G^*$ of a discrete manifold $G$ 
{\bf Hopf-Rynov} if any two vertices $v,w$ in $\Gamma$ of distance smaller 
than the diameter can be connected by a unique geodesics, where geodesics is
understood in the sense of \cite{geodesics1}. 
This is different than the graph distance. 
There are closed loops in $MK$ of length $6$ but they are not made up of 
geodesics in the above sense. The graph $MK$ is Hopf-Rynov. 
Its diameter is $4$. The smallest closed geodesics have
length $8$. 

\section{Fixed points}

\paragraph{}
{\bf Klein's Erlangen program} promoted the point of view that the group of symmetries
determines the geometry. For graphs, the group of symmetries is in general
trivial, as there are in general no non-trivial automorphisms. The graph $MK$, however 
is an example of a graph with many automorphism. The automorphism group ${\rm Aut}(MK)$,
called the Tucker group, has 96 elements. 
The {\bf Lefschetz number} of an automorphism $T$ of a simplicial complex $G$ is defined as a
super-trace of the Koopman operator $\mathcal{U}_T f = f(T)$ on $l^2(G) \simeq \mathbb{R}^n$, where
$n=|G|$ is the number of simplices and $f(T)(x)=f(Tx)$. \footnote{If $T$ is not invertible, one
needs to take $f(T^{-1})$.} If $\mathcal{U}_T$ is restricted to a basis in ${\rm ker}(H_k)$, 
one gets a matrix $U_k$. The {\bf Lefschetz number} is defined as
$$ \chi_T(G) = {\rm str}(U) = \sum_{j=0}^q (-1)^j {\rm tr}(U_j)  \; . $$
One can apply this to any graph by looking at the simplicial complex of the graph 
in the form of the vertex sets of all complete subgraphs (which is a finite abstract
simplicial complex). 

\paragraph{}
For a one-dimensional complex like $MK$, one has $\chi_T(MK)={\rm tr}(U_0)-{\rm tr}(U_1)$. 
If $T$ is the identity, the restriction matrices $U_k$ are the identity matrices and the super trace 
simplifies to the Euler characteristic. The {\bf index} of a fixed simplex $x$ is defined as 
$i_T(x)=(-1)^{{\rm dim}(x)} {\rm sign}(T|x)$, where ${\rm sign}(T|x)$ is the sign of the
permutation that $T$ induces on the simplex $x$. We proved in \cite{brouwergraph}
the general {\bf Lefschetz fixed point theorem}
$\sum_{x, T(x)=x} i_T(x) = \chi_T(G)$ for any
{\bf simplicial map} $T$ on any simplicial complex $G$. 
\footnote{The simplicial map does not have to be invertible. In general just restrict $T$
to the attractor $\bigcap_{n \geq 0} T^n(G)$, the intersection of all images of $T$.}
In dimension $q=1$, this had been proven by Nowakowski and Rival \cite{NowakowskiRival}. 

\paragraph{}
The graph $MK$ admits $96$ automorphisms. There are four different types: \\
a) The identity is the only automorphism with Lefschetz number $-8$. 
b) There are 14 automorphisms with Lefschetz number $4$. They all have order $2$ or $3$.
c) There are 24 automorphisms with Lefschetz number $2$. They all have order $2$. 
d) There are 57 automorphisms with Lefschetz number $0$. They all are fixed point free. 
(With point we mean simplex). 

\paragraph{}
We also defined $L(G)$ as the {\bf average Lefschetz number} $\frac{1}{|{\rm Aut}(G)|} \sum_T \chi_T(G)$
and showed using Burnside's lemma 
that $L(G)$ can be interpreted as the Euler characteristic of an ``orbifold" $G/{\rm Aut}(G)$
which, under some non-degeneracy conditions, is again a graph. 
In the case $G=MK$, it is $1$. We also defined the {\bf Lefschetz curvature}
$$ \kappa(x)= \frac{1}{|{\rm Aut}(G)|} \sum_{T \in {\rm Aut}_x(G)} i_T(x) \; , $$
where ${\rm Aut}_x(G)$ is the stabilizer group of $x$. The Lefschetz fixed point theorem
implied the ``{\bf Gauss-Bonnet} type formula" $\sum_{x \in G} \kappa(x) = L(G)$. 
If $G$ is the complex of $MK$, the Lefschetz curvature is constant $\kappa(x)=1/16$ 
on each of the 16 vertices and zero on each of the edges. It adds up to $L(G)$.

\section{Natural groups}

\paragraph{}
The {\bf Pauli group} $P(1)$ is one of the smallest non-Abelian
groups relevant in quantum information theory. 
The Pauli matrices $X,Y,Z$ represent spin observables for spin 
Spin-$1/2$ particles. The Pauli group so bridges
quantum matrices, group theory and the quaternions, a division algebra.
Cayley graphs like $MK$ allows one to visualize the group structure of
$P(1)$. 

\paragraph{}
We have defined a group $(G,+)$ to be {\bf natural} if there exists a
metric space $X$ which carries a group structure
$(X,*)$ such that $(G,+)$ and $(X,*)$ are isomorphic groups
and so that the group multiplications and inversions are isometries of $X$.
The integers $\mathbb{Z}$ are not natural because it carries two groups,
the Abelian $\mathbb{Z}$ and the non-Abelian $D_{\infty}$, the infinite dihedral group 
$D_{\infty} = \mathbb{Z}  \rtimes C_2$. The latter is natural because we can 
build a double strung of $\mathbb{Z}$ which now admits only $D_{\infty}$ as
a group structure.

\begin{lemma}
The Pauli group $P(1)$ is natural: there is a metric space which admits only
one group operation $P(1)$ on its underlying set up to relabeling.
\end{lemma}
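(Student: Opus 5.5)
The plan is to build the metric space as a weighted Cayley graph of $P(1)$ and then argue that its isometry group leaves room for only one group structure. Fix a generating set $S$ of $P(1)$ whose Cayley graph is $MK$; since $MK$ is cubic, $S$ contains an involution $a$ and a pair $b,b^{-1}$. On the vertex set $P(1)$ I will use the word metric in which the $a$-edges get one length and the $b$-edges another, incommensurable length. A shortest path never mixes lengths in a way that allows confusion, and any isometry must preserve the distance $1$ between neighbours. Hence every isometry is a graph automorphism that maps $a$-edges to $a$-edges and $b$-edges to $b$-edges. To destroy the extra symmetries that realise $D(16)$ and $SD(16)$, I will also orient the $b$-edges, in the spirit of the ``double string'' used for $D_\infty$. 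Orientation is not available in a plain metric, so I will replace each $b$-edge by a small asymmetric gadget. One choice is a path with two intermediate points at distances $\epsilon$ and $2\epsilon$ from its tail. Another is to attach a pendant point near the tail. Since a metric space only needs points and distances, these gadget points are allowed. The drawback is that the underlying set is then no longer $P(1)$, so the gadget must be built into the set on which the group acts.

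A cleaner alternative, which I would try first, keeps the set equal to $P(1)$ and uses three incommensurable distances. Put $d(g,h)=1$ on $a$-edges, $d(g,h)=\sqrt2$ on $b$-edges, and $d(g,h)=\sqrt3$ on the edges $g\sim gb^2$. Then take the induced shortest-path metric, or simply the graph whose edges are coloured by these distances. Any isometry preserves the three edge classes, so the isometry group $I$ is the group of colour-preserving automorphisms. It contains the left-regular action of $P(1)$. The key computation is to show that $I$ is exactly this regular copy of $P(1)$, of order $16$, or at least that $I$ has no regular subgroup other than it. This is done by computing the stabiliser of the identity $e$. An isometry fixing $e$ fixes $a$, the set $\{b,b^{-1}\}$ and $b^2$. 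One must rule out the swap $b\leftrightarrow b^{-1}$, which is the main danger, since inversion-like maps often preserve undirected colours. If that swap extends, I will add a fourth distance on edges $g\sim gab$, which is not symmetric under $b\mapsto b^{-1}$ in $P(1)$ because $P(1)$ is non-abelian. I would check by hand, or by the same kind of computer search used for the Lefschetz numbers above, that the stabiliser becomes trivial.

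Once $I\cong P(1)$ acts regularly, the conclusion follows. Any group structure $(X,*)$ with left multiplications acting by isometries gives an embedding $L:(X,*)\to I$ with $L$ acting regularly. A regular subgroup of a group of order $16$ acting on $16$ points is the whole group. So $(X,*)$ is isomorphic to $I\cong P(1)$, and the labeling is forced up to the choice of identity, that is, up to relabeling. If the definition also requires right multiplications and inversion to be isometries, note that the regular copy of $P(1)$ has these as elements of $I$ only when the metric is bi-invariant. In that case I would use the symmetric construction: make $S$ conjugation-invariant, i.e.\ a union of conjugacy classes, with the weights assigned per class. The main obstacle is the stabiliser computation, namely choosing the weighted connection set so that no nontrivial colour-preserving automorphism fixes $e$ while the metric remains bi-invariant.
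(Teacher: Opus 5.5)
Your overall plan is the same as the paper's. You weight the edges of a Cayley graph of $P(1)$ so that isometries must preserve edge classes. You show the isometry group is the left-regular copy of $P(1)$. You conclude because a regular subgroup of a group of order $16$ acting on $16$ points is the whole group. The concrete setup, however, fails at the first step.

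Cubicity only tells you that $S$ is either three involutions or of the form $\{a,b,b^{-1}\}$, and for $P(1)$ the second form cannot generate. Indeed $\Phi(P(1))=P(1)^2[P(1),P(1)]=\{\pm I\}$, so $P(1)/\Phi\cong C_2^3$ and $P(1)$ needs three generators. But $b^{-1}\equiv b$ modulo $\Phi$, so $\langle a,b,b^{-1}\rangle=\langle a,b\rangle$ is a proper subgroup; for example $\langle X,iY\rangle=\langle X,Z\rangle\cong D(8)$. The shape $\{a,b,b^{-1}\}$ is what realizes $MK$ for $SD(16)$ (take $S=\{s,r,r^{-1}\}$ with $srs=r^3$), not for $P(1)$.

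Hence your weighted graph with $a$-, $b$-, $b^2$- and $ab$-edges lives on the cosets of a subgroup of order at most $8$. It is disconnected, so the path metric is not even finite, and colour-preserving automorphisms can move one component while fixing the other. Your discussion of the $b\leftrightarrow b^{-1}$ swap and of orientation gadgets therefore concerns a configuration that does not exist. In any case the decisive stabiliser computation is only announced (``I would check by hand, or by computer''), not carried out.

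The repair is exactly the paper's choice: $MK=\mathrm{Cay}(P(1),\{X,Y,Z\})$ with three involutive generators, with the $X$-, $Y$-, $Z$-edges given lengths $1,\sqrt2,\sqrt3$. Any path with at least two edges has length $\geq 2>\sqrt3$, and $1,\sqrt2,\sqrt3$ are $\mathbb{Q}$-independent. So the distances $1,\sqrt2,\sqrt3$ occur exactly on the edges of the corresponding colour, and isometries preserve colours. An isometry fixing $e$ then fixes the unique neighbour of $e$ of each colour, and propagating along edges it fixes every vertex. Thus the isometry group is the left-regular $P(1)$, and your regular-subgroup argument finishes the proof. No orientation is needed, since involutions give undirected edges with no $b\leftrightarrow b^{-1}$ ambiguity.

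Finally, drop the bi-invariant variant. For a bi-invariant metric every conjugation $x\mapsto hxh^{-1}$ is an isometry fixing $e$, and $P(1)$ has nontrivial ones (conjugation by $X$ sends $Y$ to $-Y$). So the trivial stabiliser you aim for is impossible there; the paper only uses that left translations are isometries.
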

\begin{proof}
As $P(1)$ acts on $MK$, the group $P(1)$ is a subgroup of $Aut(MK)$. With the
usual graph distance metric there are other non-isomorphic groups acting on $MK$.
If we change the metric and render the edges corresponding to the different Pauli 
matrices generating $P(1)$ of different lengths 
we break the  $S_3$ symmetry that was present and gave $16*6$ automorphisms.
The symmetry group such a metric space $(MK,dist)$ 
must be $P(1)$ (up to relabeling). The group structure is forced if
we fix a vertex in $MK$ and call it $1$ (the neutral element in the group)
The 3 neighbors of $0$ have group elements X,Y,Z assigned which all are involutions.
As the group translations are isometries, the Pauli group structure is forced.
The relations are given by the geometry. There are cycles $(XYZ)^2$ or $(XY)^4$. 
\end{proof} 

{\bf Remark:} there is a relation with geometry:
for each of the 6 possible ordered pairs in $\{X,Y,Z\}$ like $X,Y$ 
define a path in $MK$ that can be seen as a geodesic $XYXYXY,...XY$ in the host
graph $G=K(2,2,2,2)$. 
Each of these geodesics which have length $8$ so that $(XY)^4=1$. 
The same holds for all the other pairs. Going along $XYZXYZ$ returns to 
the same point (these are the hexagonal faces in $MK$). 
There are no further relations.

\paragraph{}
Note that $P(1) = E \rtimes C(2)$, with a non-natural group
$E=C(4) \times C(2)$. We have shown that if
two groups $G,H$ are natural, then $G \rtimes H$ is natural by 
taking a Zig-Zag product on a geometric level.
But the reverse is not true. A group $G \rtimes H$ can be natural
even so $G$ is not natural. The basic example is that $C(4)$ is not natural
but that $D(8) = C(4) \rtimes C(2)$ is natural. 

\paragraph{}
Let us revisit the discrete cylinder graph $C_n \square C_2$, which is the 
{\bf prism} over the cyclic graph $C_n$ as well as the {\bf Moebius ladder} (also
with square faces) $M_n$. Under the assumption that $n \geq 4$, 
the automorphism group of $M_n$ is $D(2n)$ while the automorphism group 
of the untwisted prism is $D(n) \times C(2)$. Both the prism as well as the
moebius ladder are natural metric spaces, their metric forces a unique group structures.
The groups $D(2n)$ and $D(n) \times C(2)$ are both natural. 

\section{Remarks}

\paragraph{}
In this expository note, we have used a specific graph $MK$ to illustrate a few topics
that have interested us over time. They range from Cayley graph topics 
appearing in games like the Rubik's Cube, to unusual properties of the hypercube.
Barycentric refinements produce a connection with Julia sets of quadratic maps. 
The relations with graph coloring for manifolds parallel the classical Heawood theory of
surface coloring. The interplay of metric spaces and groups is fascinating, because
some metric spaces can force a group structure. We have seen this for the Pauli group.
We also took the opportunity to revisit the Lefschetz fixed point theorem in the
case of a graph $MK$ that has many automorphisms. Finally, the example can
illustrate intersection calculus.  
The main reason why to elaborate on this example is because we still struggle
to get a good notion of sectional curvature in discrete manifolds. The topic here has
emerged when playing with geodesic flow and parallel transport to construct geodesic sheets. 

\paragraph{}
It was a bit surprising to see the graph $MK$ is related to so many different popular topics.
As a collector of media containing mathematics and as someone who is
interested in visualizing mathematics (see \cite{CFZ}), the topic discussed here is ideal. 
There is an obvious danger that pop-culture topics can lead to overexposure. 
But it should not mean that one should stop thinking about it. 
A mathematical object enters into {\bf popular culture}, if it surfaces in movies, TV shows or
social media content, if it is realized in art, or if it appears
abundantly in popular mathematics books.
The tesseract for example has a cameo in a painting by the Spanish 
painter Dal\'i, it appears in movies
like {\bf hypercube}. Hamiltonian cycles have motivated the icosa game by Hamilton, Cayley groups
appear as puzzles, like the Rubik's Cube. A genus-2 Cayley graph of the Tucker group has been made
as a sculpture by De Witt Godfrey and Duane Martinez as discussed in \cite{Ferguson}. 

\begin{figure}[!htpb]
\scalebox{0.05}{\includegraphics{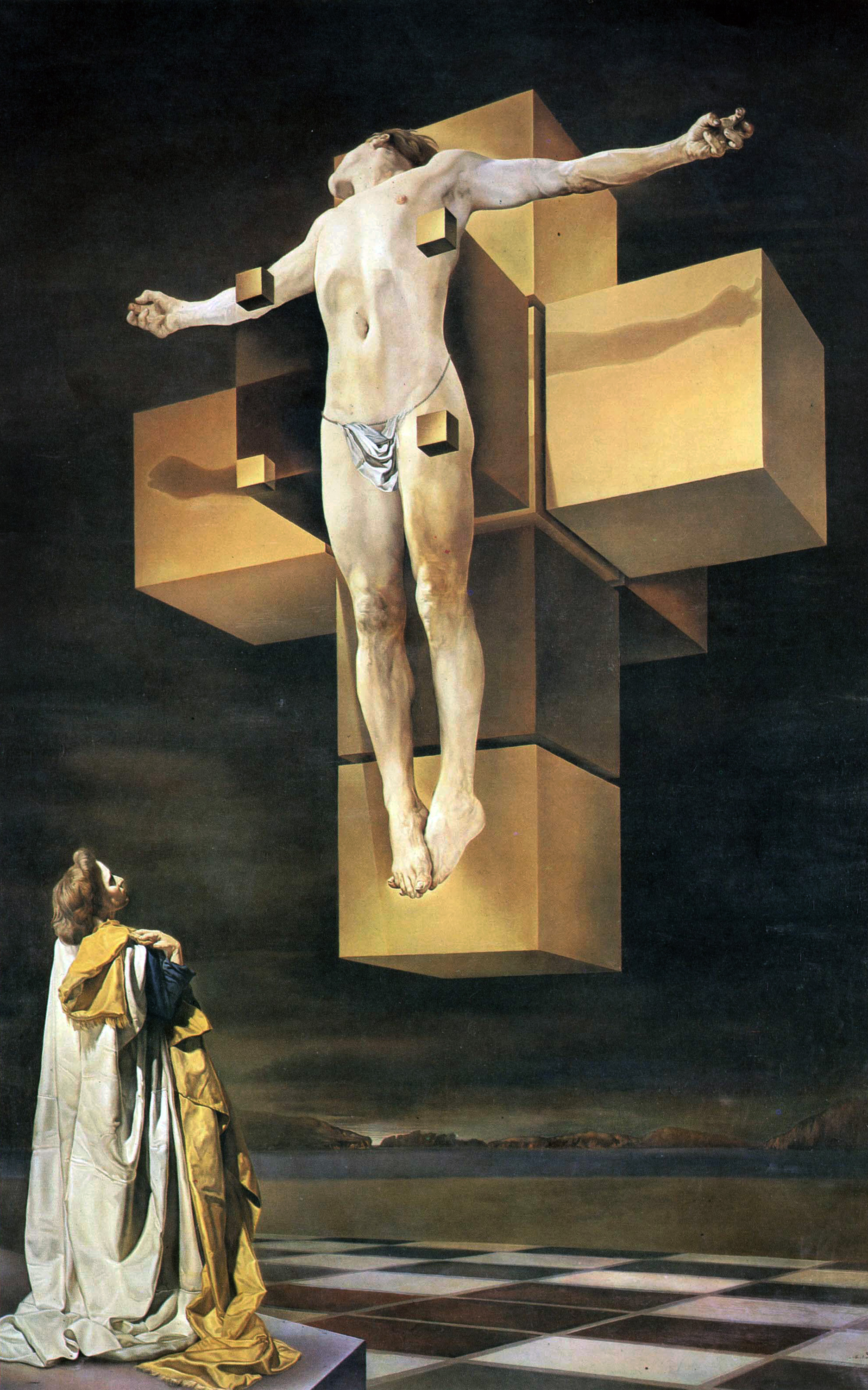}}
\scalebox{0.137}{\includegraphics{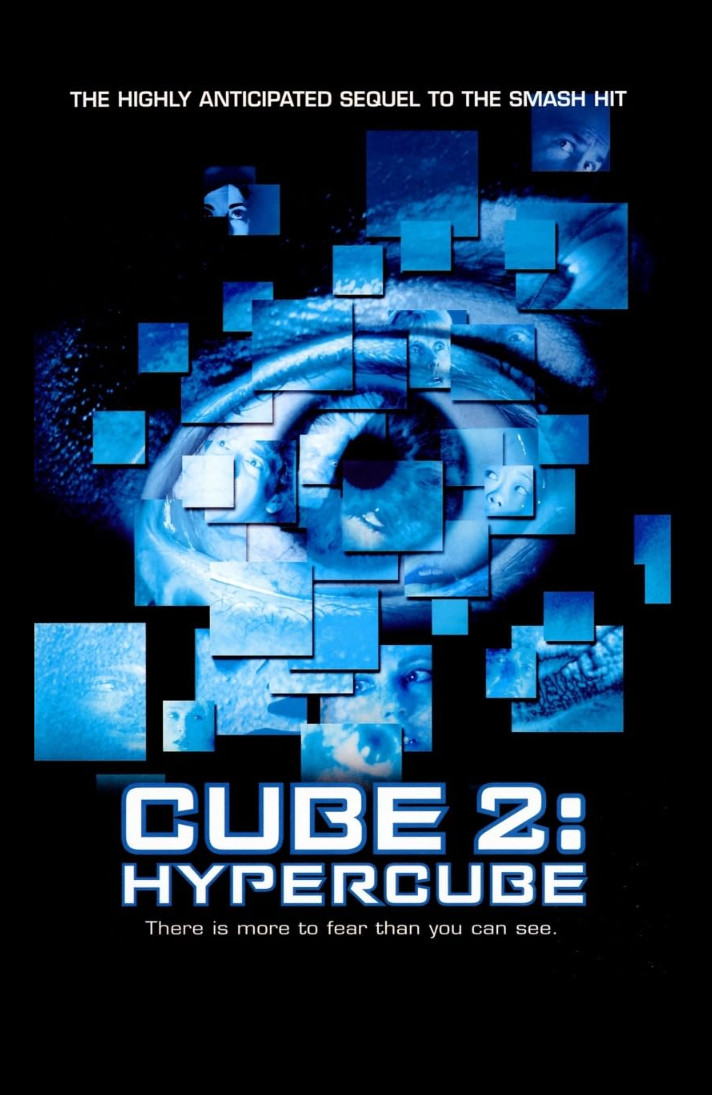}}
\scalebox{0.123}{\includegraphics{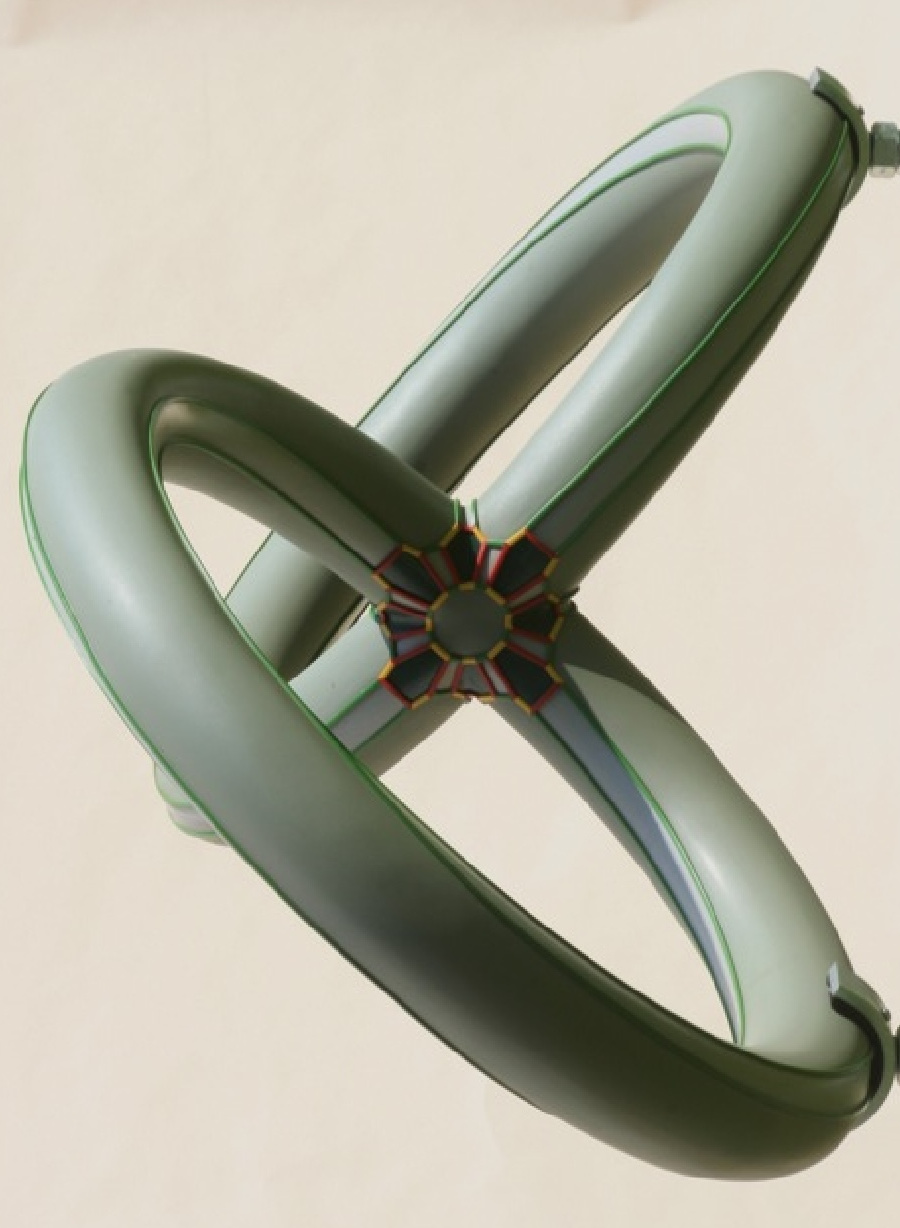}}
\label{popculture}
\caption{
Hypercubes have long been in pop-culture: the Hypercubus by Salvador Dal\'i from 1954.
The middle figure shows the DVD cover of the Hypercube~2 movie that appeared in 2002.
To the right, we see a sculpture by De Witt Godfrey and Duane Martinez showing a Cayley
graph of the Tucker group.
}
\end{figure}

\paragraph{}
Mathematical field like probability theory or game theory have emerged from
playing around. The nature of games very much depends on the number of players. 
One player games are solitaires or puzzles. Also in two player games, cooperation still does not matter, 
the set-up is simple and can be done purely graph theoretically \cite{Games2026}.
The Rubik's Cube is played still today, 50 years after its first emergence.
\footnote{There are about 150'000 speed-cubers participating in WCA events as of 2026. }

\paragraph{}
The 4-dimensional polytopes have all been animated many times.
Platonic solids have escaped pure mathematics and entered the realm of 
sacred geometry \cite{Skinner}, into symbolism like relations with
fire, earth, air, water, and ether which go back to 
Plato's Timaeus \cite{PlatoTimaeus}. Later there were
speculations about connections with celestial objects,
as in Kepler's {\it Harmonices Mundi}.

\begin{figure}[!htpb]
\scalebox{0.723}{\includegraphics{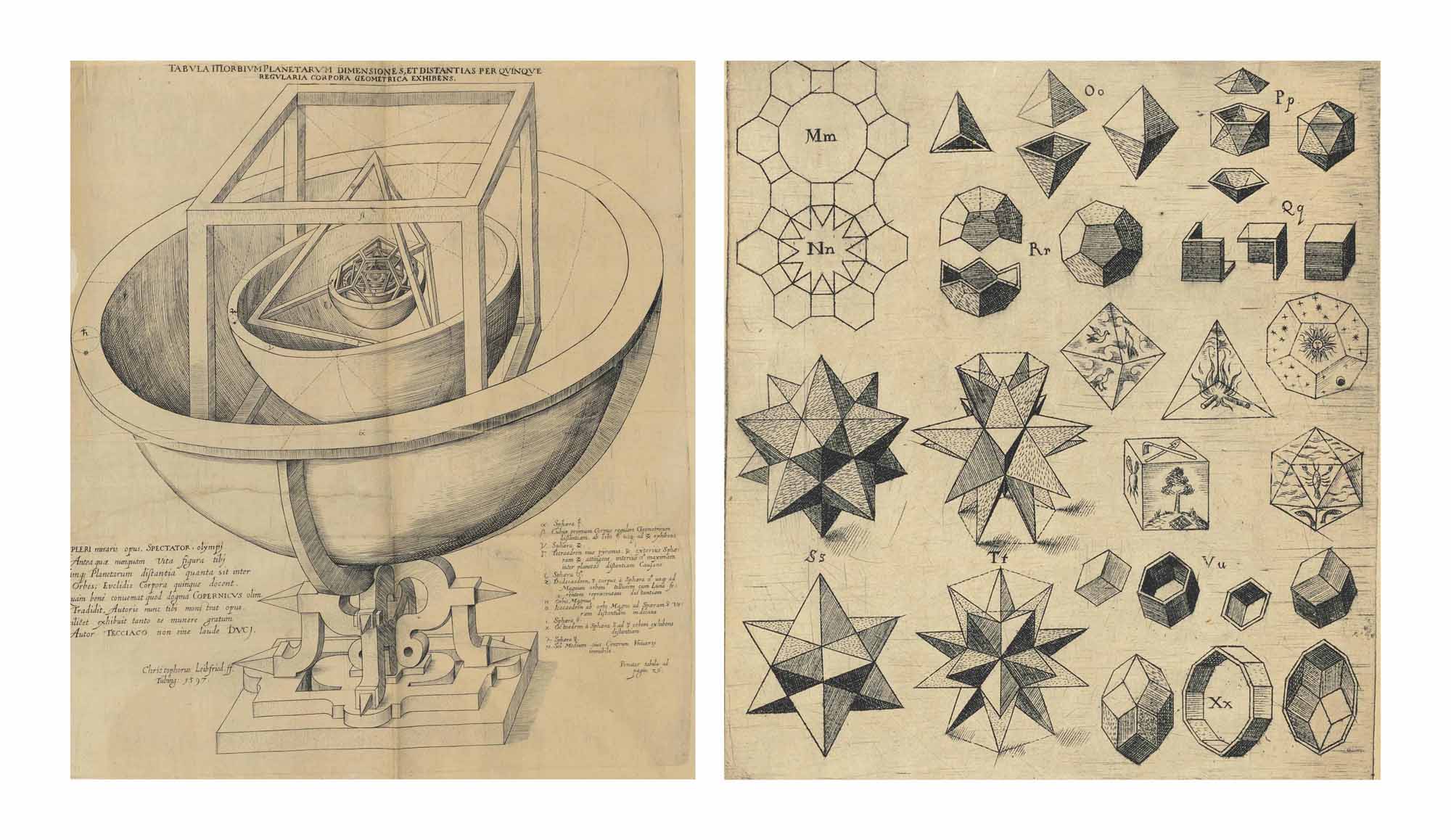}}
\label{harmonices}
\caption{
Harmonices Mundi by Kepler.
}
\end{figure}

\paragraph{}
The Pauli matrices are pivotal in quantum mechanics. 
Pauli's phrase ``not even wrong" has become a motif. 
The connection of Rubik's Cube with quarks appears
in \cite{Golomb}.  The 3-sphere was visualized using Hopf
fibrations in Banchoff's book \cite{Banchoff1990}.
Classically, the Clifford tori are $\{ |z|=\cos(t),|w|=\sin(t) \}$ in 
$S^3=\{(z,w) \in \mathbb{C}^2,  |z|^2+|w|^2=1 \}$,
where $0 < t < \pi/2$. In the limiting case $t=0$ or $t=\pi/2$, the tori
degenerate to circles. 

\begin{figure}[!htpb]
\scalebox{0.1}{\includegraphics{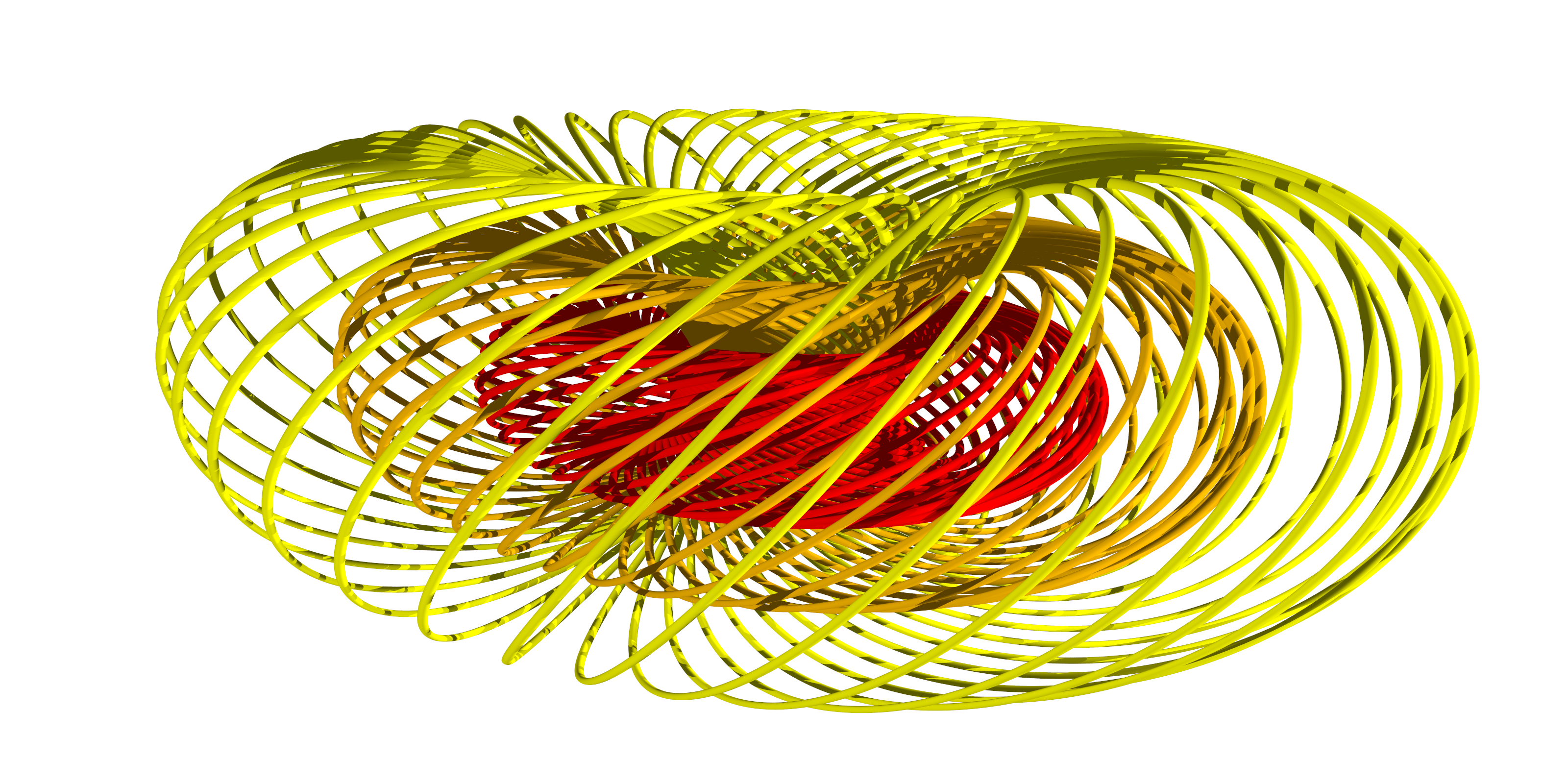}} 
\label{hopf}
\caption{
Clifford tori to visualize the 3-sphere.
}
\end{figure}

\paragraph{}
Aesthetic concepts also matter in mathematics \cite{AestheticMeasure}. 
The Wolfram language displays with
``StyleGraphs" structures embeddings \cite{WeisssteinMoebiusKantor}. Here are
20 displays from that list, selected and ordered according to our personal 
preferences. My own criteria for the ordering were
symmetry, surprise, and equi-distribution (no clutter), which of course are 
personal. Birkhoff tried to quantify beauty as ``Order/Complexity", O/C.
Complexity is for Birkhoff measured by the amount of effort to 
apprehend an object, while order means symmetry or harmony. 

\begin{figure}[!htpb]
\scalebox{0.8}{\includegraphics{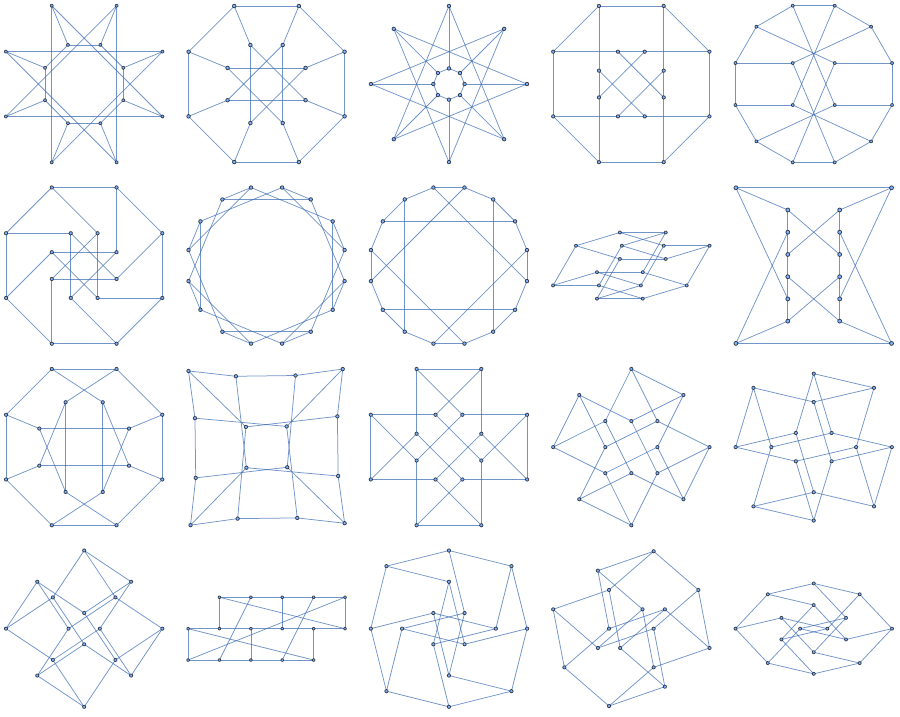}} 
\label{hopf}
\caption{
Embeddings of MK from the Wolfram library. After making our own choice , we
fed this picture to GPT-5.5 and asked to quantify beauty in the spirit
of Birkhoff. Here is what the machine answered: 
{\it a possible Birkhoff-like scoring scheme 
for these graph drawings would be: O = S + R + P + B
where S counts symmetries, R counts repeated motifs, 
P measures parallelism or equal spacing, and B measures global balance. Then
C = V + E + X + I, where V is the number of vertices, E the 
number of edges, X the number of crossings, and I an irregularity penalty. 
A decisive factor would be O, plus how well the crossings are organized.}
}
\end{figure}

\paragraph{}
MK produces a map on 
the 2-torus which needs 7 colors. Cheaper is to embed the complete graph $K_7$ into 
the 2-torus. We feel that MK evident using MK. 
An exciting open problem in the field of {\bf manifold coloring} is to
answer the {\bf Albertson and Stromquist question}, whether every torus 
can be 5 colored. This more topological graph coloring problem has 
A rather general upper bound for $q$-manifold 
is $2q+2$, but that does not exclude that there could be tori 
of chromatic number $6$. 

\paragraph{}
The argument for the upper chromatic bound $2q+2$ is with respect to dimension $q$ 
and uses that the dual of
every q-manifold has the {\bf vertex arboricity} $2$. Now color facets belonging to one sort of tree
with $q+1$ colors and the others with an other set of $q+1$ colors. A q=1 manifold
(a union of circular graph of length $4$ or larger) is self dual and has vertex arboricity
2. To color a 2-torus with 6 colors, cut it into two cylinders, look at the dual graph, 
color the boundary with 2 colors and grow trees into the interior, coloring each triangle
so in two different colors. Each of the triangle colors now takes a set of 3 vertex
colors. The Albertson-Stromquest question is now: {\bf ``are 5 colors enough on a torus?"}. 
It is a completely different problem than the Heawood question from topological
graph theory. And MK illustrates, $MK^*$ is what topological graph theorists would
call a ``discrete torus", a triangulation of the torus. But is not manifold like from a 
finite geometry perspective as even the smallest ball in that graph is a wheel graph,
where the boundary is a figure 8 graph. It has not the local manifold property in the
terminology of Albertson-Stromquest, or is not a discrete 2-manifold in the sense that
all unit spheres are circular graphs with 4 or more elements. 

\paragraph{}
Let us use the example to illustrate some spectral graph theory \cite{Knill2024}. We proved the 
inequality $\lambda_k \leq d_k + d_{k-1}$ for the eigenvalues $\lambda_k$ of any graph or
quiver, where $d_k$ is the $k$'th ordered vertex degrees, assuming $d_0=0$ 
(both sequences are arranged increasingly). For a $3$-regular graph, 
this only gives $\lambda_k \leq 6$. The Kirchhoff
matrix of $MK$ has eight integer eigenvalues $0,2,2,2,4,4,4,6$ and two
algebraic eigenvalues $3 \pm \sqrt{3}$, each of multiplicity $4$. 
The Hodge Laplacian $L_0 \oplus L_1$ (which is a $40 \times 40$ matrix) has
twice as many nonzero eigenvalues by the McKean-Singer symmetry \cite{McKeanSinger,knillmckeansinger}
and additionally $9$ more eigenvalues $0$ as the Betti numbers are 
$b_0=1$ (it is a connected graph) and $b_1=|E|-|V|+1=24-16+1=9$ 
(by the Euler-Poincar\'e formula $\chi(MK)=|V|-|E|=b_0-b_1=-8$). 
If the 8 hexagonal loops in MK were counted as faces, 
we would obtain a CW complex $M$ that has Euler characteristic 
$\chi(M) = |V|-|E|+|F|=16-24+8=0$. 

\paragraph{}
The graph $MK$ is one-dimensional. The Barycentric refinements agree with 
those usually used in topological graph theory.
\footnote{As there are no triangles, we only need to consider 
          zero- and one-dimensional simplices}. 
Barycentric refinements help to visualize graphs. The spectrum of the Laplacian converges
in law to the universal law in one-dimensions, the {\bf arcsine-distribution}
on $[0,4]$ with cumulative distribution function $(2/\pi) \arcsin(\sqrt{x}/2)$.
Spectral universality occurs in any dimension $q$: there is a universal 
limiting density of states which only depends on $q$ \cite{KnillBarycentric2} and
is is non-trivial and not explored in dimension $d>1$. 
For $q=1$, the limiting spectral function is known and related to the {\bf Julia set} of the
{\bf quadratic map} $z \mapsto z(4-z)$, conjugated to
$z \mapsto z^2-2$ ($c=-2$ at lowest tip of the Mandelbrot set) or to the {\bf Ulam map}
$z \mapsto 4z(1-z)$.

\begin{figure}[!htpb] 
\scalebox{0.5}{\includegraphics{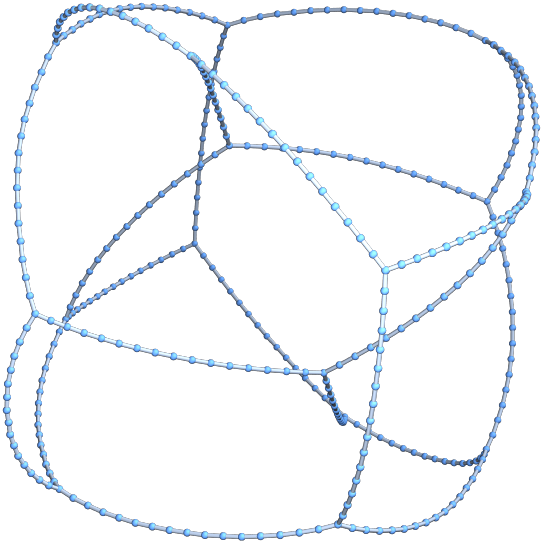}}
\scalebox{0.5}{\includegraphics{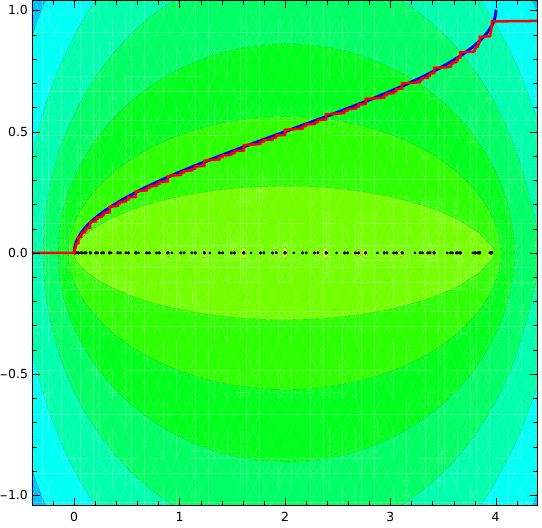}}
\label{Barycentric}  
\caption{
The fourth Barycentric refinement $MK_4$ of MK. The spectrum 
of the Kirchhoff Laplacians of successive refinements $MK_n$ converge in law
to the arcsine distribution, the unique equilibrium measure on the 
a Julia set for the quadratic map. 
}
\end{figure}

\paragraph{}
In {\bf connection calculus}, one can look at the (here one-dimensional)
simplicial complex $G=V \cup E$ of the graph and define the 
{\bf connection matrix} $L$ as $L(x,y)=\chi(C(x) \cap C(y))$,
where $C(x) = \{ y \in G, y \subset x\}$ and where
$\chi(A) = \sum_{y \in A} \omega(y)$ is the {\bf Euler characteristic}
with $\omega(x)=(-1)^{{\rm dim}(x)}$ so that $\chi(G) = |V|-|E|=-8$. 
\footnote{We always mean with $\subset$ the subset operation, not the strict subset operation
$x \subset x$}.
The unimodular matrix $L$ has as its inverse the integer {\bf Green function matrix}
$g(x,y) = \omega(x) \omega(y) \chi(U(x) \cap U(y))$, where 
$U(x)=\{ y \in G, x \subset y\}$ is the {\bf star} of $x \in G$. The statement $g=L^{-1}$ was 
dubbed {\bf Green star formula} because $g$ is a Green's function and stars are involved in the
expressions for the entry. 
The determinant of $L$ equal to the {\bf Fermi characteristic} 
$\prod_{x \in G} \omega(x)$, which is
a multiplicative version of the Euler characteristic $\chi(G) = \sum_{x \in G} \omega(x)$.
We have also shown that the number of positive eigenvalues of $L$ minus the number of negative
eigenvalues of $L$ is the Euler characteristic of $G$ \cite{KnillEnergy2020}.
In the case of $MK$, we have $16$ positive eigenvalues of $L$ and $24$ negative 
eigenvalues of the connection Laplacian $L$. 
To investigate whether one can ``hear the Euler characteristic" of a simplicial complex  
\cite{HearingEulerCharacteristic}
was inspired by the inverse spectral question \cite{Kac66}. As $MK$ is one-dimensional, one
can relate the Hodge (intersection Laplacian) $H$ and connection Laplacian $L$ via the
{\bf Hydrogen identity} $L-L^{-1}=|H|$, where
$H$ is the sign-less Hodge Laplacian. The spectral zeta function of the connection
Laplacian satisfies the functional equation 
$\zeta_{L^2}(-s) = \zeta_{L^2}(s)$ \cite{DyadicRiemann}. 

\begin{figure}[!htpb] 
\scalebox{1.0}{\includegraphics{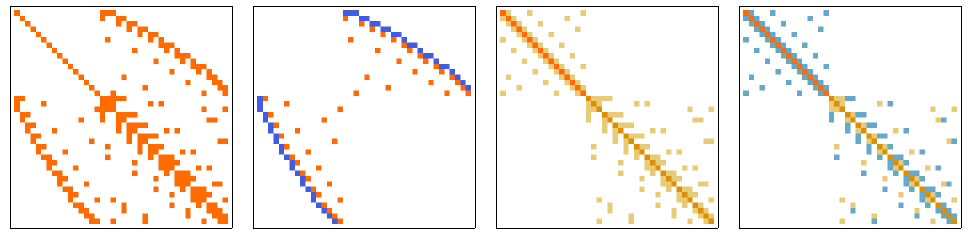}}
\label{Hydrogen}  
\caption{
We see the Dirac matrix $D=d+d^*$ and the connection matrix $L$ of MK. 
The third matrix is the Hydrogen operator $L-L^{-1}$ which is related to 
the Hodge Laplacian $H=D^2$ at the end by $L-L^{-1} = |D|^2$ The Hodge and
Hydrogen matrices are block diagonal. The first block is the Kirchhoff matrix
rsp. the sign-less Kirchhoff matrix. 
}
\end{figure}

\bibliographystyle{plain}

\end{document}